\newtheorem{tm}{Theorem}
\newtheorem{defi}{Definition}
\newtheorem{rem}{Remark}
\newtheorem{rems}{Remarks}
\newtheorem{lm}{Lemma}
\newtheorem{nota}{Notation}
\newtheorem{p}{Problem}
\begin{document}

\title{The disconnectedness of certain sets defined after
  uni-variate polynomials}
\author{Vladimir Petrov Kostov}
\address{} 
\email{}

\begin{abstract}
  We consider the set of monic real uni-variate polynomials of a given degree
  $d$
  with non-vanishing coefficients, with given signs of the coefficients and
  with given quantities $pos$ of their positive and $neg$ of their negative
  roots (all roots are distinct). For $d\geq 6$ and for signs of the
  coefficients
  $(+,-,+,+,\ldots ,+,+,-,+)$, we prove that the set of such polynomials having
  two positive, $d-4$ negative and two complex conjugate roots,
  is not connected. For $pos+neg\leq 3$ and for any $d$,
  we give the exhaustive answer to the
  question for which signs of the coefficients there exist polynomials with
  such values of $pos$ and $neg$.
  
   {\bf Key words:} real polynomial in one variable; hyperbolic polynomial;
  Descartes' rule of signs; discriminant set\\ 

  {\bf AMS classification:} 26C10; 30C15\\

  Author's address: Universit\'e C\^ote d’Azur, CNRS, LJAD, France\\
  
  email: vladimir.kostov@unice.fr
\end{abstract}
\maketitle

\section{Introduction}

We consider questions about the general family of monic uni-variate real
degree $d$ polynomials: $Q_d:=x^d+\sum _{j=0}^{d-1}a_jx^j$. In the space
$\mathbb{R}^d$ of the coefficients $a_j$ one defines the {\em discriminant set}
$\Delta _d$ as the set of their values for which the
polynomial $Q_d$ has a multiple real root. More precisely, if $\Delta ^1_d$
is the set of values of the coefficients for which $Q_d$ has a multiple root
(real or complex), then this is the set of the zeros of the determinant of the 
Sylvester matrix of the polynomials $Q_d$ and $Q_d'$. One has to set
$\Delta _d:=\Delta _d^1\setminus \Delta _d^2$, where $\Delta _d^2$ is the
set of values of the coefficients $a_j$ for which there is a multiple
complex conjugate pair of roots of $Q_d$ and no multiple real root.
It is true that dim$(\Delta _d)=$dim$(\Delta _d^1)=d-1$ and
dim$(\Delta ^2_d)=d-2$.

The set

$$R_{1,d}:=\mathbb{R}^d\setminus \Delta _d$$
consists of
$[d/2]+1$ open components
of dimension~$d$ ($[.]$ stands for the integer part of).
The polynomials $Q_d$ from a given component have one and the same
number~$\mu$ of
real roots (which are all distinct); the number~$\nu$ of complex conjugate pairs
can range from $0$ to $[d/2]$, because $\mu +2\nu =d$.
Given two polynomials with one and the
same number~$\nu$, one can continuously deform the roots of
the first polynomial into the roots of the second one by keeping the real roots
distinct throughout the deformation. This proves that to any possible
number~$\nu$ corresponds exactly one component of the set~$R_{1,d}$.

In the same way one can consider the components of the set

$$R_{2,d}:=\mathbb{R}^d\setminus (\Delta _d\cup \{ a_0=0\} )~.$$
The polynomials from one and the same open component
(also of dimension~$d$) have one and the same numbers $pos$ of
positive and $neg$ of negative roots (and no vanishing roots). When deforming 
the roots of one polynomial into the roots of another one,
one has to keep the same numbers $pos$ and $neg$ 
throughout the deformation. To each pair ($pos$, $neg$)
corresponds exactly one component of the set~$R_{2,d}$. As
$pos+neg=\mu$, $0\leq pos$, $neg\leq \mu$ and $\mu +2\nu =d$, there are

$$(d+1)+(d-1)+(d-3)+\cdots =([d/2]+1)([(d+1)/2]+1)$$
components of the set $R_{2,d}$.

A more complicated task is to study the components of the set

$$R_{3,d}:=\mathbb{R}^d\setminus (\Delta _d\cup
\{ a_0=0\} \cup \{ a_1=0\} \cup \cdots \cup \{ a_{d-1}=0\} )$$
of monic uni-variate polynomials with no multiple real roots and no zero
coefficients.

\begin{defi}\label{defisignpattern}
  {\rm A {\em sign pattern} of length $d+1$ is a sequence of $d+1$
    symbols $+$ and/or $-$ beginning with a~$+$. We say that a polynomial
    $Q_d$ with no vanishing coefficients defines the sign pattern
    $\sigma _0:=(+,\beta _{d-1},\beta _{d-2},\ldots ,\beta _0)$,
    $\beta _j=+$ or~$-$, 
    (notation: $\sigma (Q_d)=\sigma _0$), if sign$(a_j)=\beta _j$,
    $j=0$, $\ldots$, $d-1$.}
  \end{defi}
  
One can ask the question to which couple (sign pattern, pair $(pos, neg)$) 
(we call them {\em couples} for
short) corresponds at least one component of the set $R_{3,d}$.
The polynomials from a given component of $R_{3,d}$ have one and the same
couple. All components are of dimension~$d$.

When considering the set 
$R_{3,d}$, it is self-understood that the couples 
have to be defined in accordance with Descartes' rule of signs. 
This rule states that a real uni-variate polynomial $Q_d$
has not more positive roots
counted with multiplicity than the number $c$ of sign changes in the sequence of
its coefficients; the difference $c-pos$ is even,
see~\cite{Ca}, \cite{Cu}, \cite{DG}, \cite{Des}, \cite{Fo},
\cite{Ga}, \cite{J}, \cite{VJ}, \cite{La} or~\cite{Mes}.
Hence the sign of the constant term is
$(-1)^{pos}$. When the polynomial has no zero coefficients,
Descartes' rule of signs applied to $Q_d(-x)$ implies that $Q_d$
has not more negative roots counted with multiplicity than the number $p$
of sign preservations in that sequence (hence $c+p=d+1$), and the difference
$p-neg$ is also even.

\begin{defi}\label{deficompatible}
  {\rm A pair $(pos, neg)$ satisfying these conditions w.r.t.
a given sign pattern $\sigma _0$ is called {\em compatible} with~$\sigma _0$
(and vice versa), and the couple $(\sigma _0, (pos, neg))$
is also called {\em compatible}.
For a monic polynomial $Q_d$ with no vanishing coefficients, with $pos$
positive simple and $neg$ negative simple
roots and no other real roots, we say that $Q_d$ {\em realizes} the couple
$(\sigma (Q_d), (pos, neg))$.}
  \end{defi}

Yet this compatibility is just a 
necessary condition which turns out not to be sufficient. That is, there exist
cases when to certain compatible couples correspond no components of $R_{3,d}$.
So we formulate the first problem which we consider in the present paper:

\begin{p}\label{Problem1}
  For a given degree $d$, for which compatible couples do there exist
  monic polynomials realizing these couples? In other words, to which
  of the compatible couples there corresponds at least one component
  of the set $R_{3,d}$?
  \end{p}

Some results in relationship with Problem~\ref{Problem1}
are formulated in the next section. The problem seems to have been 
stated for the first time in~\cite{AJS}. The first
example when to a compatible couple corresponds no component
of the set $R_{3,d}$ (this is an example with $d=4$),
and the exhaustive answer to the problem for $d=4$,
are to be found in~\cite{Gr}. For $d=5$ and $d=6$, the result is given
in~\cite{AlFu}. For $d=7$ and partially for $d=8$ (resp. completely for
$d=8$), the answer is
formulated and proved in~\cite{FoKoSh} and~\cite{FoKoSh1}
(resp. in~\cite{KoCzMJ}). Different aspects concerning Descartes' rule of signs
are treated in papers \cite{KoMB},
\cite{CGK}, \cite{CGK1}, \cite{CGK2}, \cite{CGK3} and~\cite{FoNoSh}.

Of particular importance is the class of 
{\em hyperbolic polynomials}, i.~e. 
real polynomials whose roots are all real. The {\em hyperbolicity domain}
$\Pi _d$ is the set of values of the coefficients $a_j$ for which the
polynomial $Q_d$ is hyperbolic. For properties of hyperbolic polynomials
and the domain $\Pi _d$ see \cite{Ar}, \cite{Gi}, \cite{KoProcRSE}, \cite{Me}
and~\cite{Ko}.

In what follows we are also interested in another problem:

\begin{p}\label{Problem2}
  For a given degree $d$, to which compatible couples correspond two or 
  more components of the set $R_{3,d}$?
\end{p}

To formulate our first result connected with Problem~\ref{Problem2}
we introduce the following notation:

\begin{nota}\label{notaSP}
  {\rm For $d\geq 4$, we consider 
    $\mathbb{R}^d$ as the set
    $\{ (a_{d-1}, a_{d-2},\ldots ,a_0)|a_j\in \mathbb{R}\}$
    of $d$-tuples of coefficients (excluding the leading one)
    of polynomials 
    $Q_d$.
        We denote by $\sigma _{\bullet}$
  the sign pattern $(+,-,+,+,\ldots ,+,+,-,+)$ and  
  by $\Pi ^*_d(\sigma _{\bullet})$ (resp. by $A(\sigma _{\bullet},(2,d-4))$)
  the subset of
  $\mathbb{R}^d$ of polynomials with signs of the coefficients (all
  non-zero) as defined by
  $\sigma _{\bullet}$ and having four positive and $d-4$ negative distinct
  real roots (resp. two positive and $d-4$ negative distinct roots and one
  complex conjugate pair). Hence the polynomials of the set
  $\Pi ^*_d(\sigma _{\bullet})$ are hyperbolic while the ones of the set
  $A(\sigma _{\bullet},(2,d-4))$ are~not.}
  \end{nota}

The following theorem
is proved in Section~\ref{secprtmbasic}.

\begin{tm}\label{tmbasic}
  (1) For $d\geq 6$, the set $A(\sigma _{\bullet},(2,d-4))$ is non-empty and
  consists of more than one component of the
  set $R_{3,d}$. Hence the set $A(\sigma _{\bullet},(2,d-4))$ is not connected.

  (2) For $d=4$ and $5$, the respective sets $A(\sigma _{\bullet},(2,0))$ and
  $A(\sigma _{\bullet},(2,1))$ are connected.
\end{tm}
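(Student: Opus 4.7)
The plan is to handle parts (1) and (2) separately, with the bulk of the work in the disconnectedness claim of (1).

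\medskip

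For non-emptiness in (1), I would start from a hyperbolic polynomial $H \in \Pi^*_d(\sigma_\bullet)$ with simple positive roots $p_1<p_2<p_3<p_4$ and $d-4$ simple negative roots; its existence for $d\geq 6$ is known from the realizability results cited in the introduction (for small $d$) and extends to larger $d$ by adding small negative roots. Replacing one factor $(x-p_i)(x-p_{i+1})$ in the factorization of $H$ by $((x-u)^2+v^2)$, with $u$ near $(p_i+p_{i+1})/2$ and $v>0$ small, produces a polynomial having two positive simple, $d-4$ negative simple real roots and one complex conjugate pair. By continuity, its coefficients remain close to those of $H$ and therefore share the sign pattern $\sigma_\bullet$, so the polynomial lies in $A(\sigma_\bullet,(2,d-4))$.

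\medskip

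For the disconnectedness, the key observation is that the couple $(\sigma_\bullet,(2,d-2))$ is incompatible (only $d-4$ sign preservations are available in $\sigma_\bullet$, but $d-2$ would be needed). Hence, as long as the coefficients remain in the sign pattern stratum of $\sigma_\bullet$, the complex pair of a polynomial in $A(\sigma_\bullet,(2,d-4))$ cannot coalesce into a double \emph{negative} real root; every sign-preserving degeneration of the complex pair to a double root must occur at a positive value. Depending on the position of this positive double root relative to the two surviving simple positive roots (below both, between them, or above both), one obtains distinct ``exits'' from $A(\sigma_\bullet,(2,d-4))$ into $\Pi^*_d(\sigma_\bullet)$. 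I would exhibit polynomials $P_1, P_2 \in A(\sigma_\bullet,(2,d-4))$ obtained as small perturbations from two of these exit configurations and argue that they lie in distinct components of $A(\sigma_\bullet,(2,d-4))$.

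\medskip

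The main obstacle is to prove rigorously that $P_1$ and $P_2$ cannot be joined by a path within $A(\sigma_\bullet,(2,d-4))$. Any such path would have to slide the real part of the complex pair past the surviving simple positive roots while keeping the imaginary part positive and all coefficients of $Q_d$ of the prescribed signs. The strategy I would pursue is to construct a locally constant function on $A(\sigma_\bullet,(2,d-4))$ distinguishing $P_1$ from $P_2$ --- a natural candidate is a signed count derived from values of $Q_d$ at specific test points, or from the real roots of a derived polynomial such as $Q_d'$ --- or, failing that, to trace a carefully chosen coefficient along any hypothetical path and derive a sign contradiction via the intermediate value theorem.

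\medskip

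For part (2), the set $A(\sigma_\bullet,(2,d-4))$ can be parametrized directly by the two positive simple roots, the $d-4\in\{0,1\}$ negative roots, and the real and imaginary parts $(u,v)$ of the complex root in the upper half-plane. The sign pattern $\sigma_\bullet$ becomes a small system of polynomial inequalities in these parameters. For $d=4$ and $5$, one can verify directly that the resulting semi-algebraic set is connected by constructing explicit continuous paths between any two of its points. The small number of negative roots eliminates the combinatorial room needed for the obstruction that produces disconnectedness when $d\geq 6$.
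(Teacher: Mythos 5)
There is a genuine gap at the heart of your argument for part (1): you correctly identify that the whole difficulty is to show $P_1$ and $P_2$ cannot be joined inside $A(\sigma_\bullet,(2,d-4))$, but you never actually produce the locally constant invariant or the contradiction — you only list candidate strategies (``a signed count derived from values of $Q_d$ at test points \dots or, failing that, to trace a carefully chosen coefficient''). Moreover, the distinction you propose to track (the position of the degenerating \emph{double positive root}, i.e.\ essentially the real part of the complex pair, relative to the two surviving positive roots) is not a locally constant function on the stratum: the complex pair's real part can drift past a simple positive real root without any collision of roots or change of sign pattern, so it cannot separate components. The invariant that actually works, and which the paper uses, is the position of the \emph{moduli of the two positive roots} relative to the moduli of the $d-4$ negative roots: one exhibits $Q^1$ with both positive roots of modulus larger than every negative root, and $Q^2$ (its reciprocal polynomial, using that $\sigma_\bullet$ is center-symmetric) with both positive roots of modulus smaller than every negative root. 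Any path between them inside the stratum forces, at some parameter value, a positive root and a negative root of equal modulus; rescaling makes these $\pm1$, and then $Q(1)=Q(-1)=0$ forces the sum of the even-degree coefficients to vanish — impossible because in $\sigma_\bullet$ all even-degree coefficients are positive and the leading one is $1$ (for odd $d$ one first divides out one negative root and reduces to the even case). Your observation that $(\sigma_\bullet,(2,d-2))$ is incompatible is true but irrelevant to this obstruction.

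Part (2) is also not proved: asserting that for $d=4,5$ ``one can verify directly that the resulting semi-algebraic set is connected by constructing explicit continuous paths'' is a restatement of the claim, not an argument. The paper's treatment is nontrivial — it retracts $A(\sigma_\bullet,(2,d-4))$ onto the locus with a double positive root normalized to $1$, and then (especially for $d=5$) carries out a careful analysis of an explicit plane semialgebraic region bounded by two ellipses, a hyperbola, a parabola and two lines. Without at least setting up that parametrization and exhibiting why the resulting region is connected, this part remains unestablished.
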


\begin{rems}\label{remstmbasic}
  {\rm (1) One can mention cases in which the components of the set $R_{3,d}$ are
contractible and to each compatible couple corresponds exactly one 
component
of the set $R_{3,d}$ (see~\cite{Koconnect}). Namely, such are the cases of
hyperbolic polynomials and
of polynomials having exactly one or no
real roots at~all.


(2) In the case of polynomials having exactly two real distinct roots
(hence $pos+neg=2$)
to each compatible couple corresponds either one or no
component of~$R_{3,d}$, and all components are contractible. See more details
in the next section or in~\cite{Koconnect}. Whether in the case of exactly three real roots to each compatible couple corresponds at most one component of the set $R_{3,d}$ is an open question.

(3) For $d=4$ and $d=5$, pictures of the set $\Delta _d^1$ (from which one can deduce the form of the set 
$A(\sigma _{\bullet},(2,d-4))$) can be found in
\cite{KoSh} and \cite{CGK3} respectively.}

  \end{rems}

\section{Comments and further results\protect\label{seccomments}}




Given a sign pattern $\hat{\sigma}$ with $c$ sign changes and $p$
sign preservations
(hence $c+p=d$), Descartes' rule of signs implies that any
hyperbolic polynomial with sign pattern $\hat{\sigma}$ has exactly $c$ positive
and exactly $p$ negative roots counted with multiplicity. We define the 
{\em canonical order of moduli} corresponding to~$\hat{\sigma}$. 
The sign pattern $\hat{\sigma}$ is read from the right and
to each sign change (resp. sign preservation) one puts in correspondence
the letter $P$ (resp. the letter $N$).

For example, for
$\hat{\sigma}=\sigma _{\dagger}:=(+,-,-,-,+,+)$ (resp. for
$\hat{\sigma}=\sigma _{\bullet}$) this gives 
the string $NPNNP$ (resp. $PPNN\cdots NNPP$, $d-4$ times $N$).
After this one inserts
the symbol $<$ between any two consecutive letters which in the cases
of $\sigma _{\dagger}$ and $\sigma _{\bullet}$ gives

$$N<P<N<N<P~~~\, {\rm and}~~~\, 
P<P<N<N<\cdots <N<N<P<P$$
respectively. If one denotes by $\alpha _j$ and $\beta _j$
the moduli of the positive and negative roots, then one replaces the letters
$P$ and $N$ by these moduli which 
in the case of $\sigma _{\dagger}$ defines the canonical order

$$\beta _1<\alpha _1<\beta _2<\beta _3<\alpha _2$$
whereas the canonical order corresponding to $\sigma _{\bullet}$
is given by~(\ref{eqalphabeta}).

It is true that for any sign pattern $\sigma _0$ of length $d+1$,  
there exists a degree $d$ monic hyperbolic polynomial $T$ 
with $\sigma (T)=\sigma _0$ whose roots define the respective
canonical order of moduli, see Proposition~1 in~\cite{KoSe}.




Our next step is to consider the cases when the polynomial $Q_d$ has not more
than three real roots, i.~e. $pos+neg\leq 3$ (and hence in the case of equality
the possible values of
the pair $(pos, neg)$ are $(3,0)$, $(2,1)$, $(1,2)$ and $(0,3)$).
For the cases $pos=neg=0$
and $pos+neg=1$, see part~(1) of Remarks~\ref{remstmbasic}. For $pos+neg=2$
(hence $d$ is even), we
remind some of the results of~\cite{Koconnect}.

\begin{defi} {\rm For $pos+neg=2$, we define {\em Case 1)} (resp.
  {\em Case 2)})
by the conditions the constant term to be positive,
all coefficients of monomials of odd degree to be positive (resp. negative),
the pair $(pos, neg)$ to equal $(2,0)$ (resp. $(0,2)$) and the coefficient
of at least one
monomial of even degree to be negative.}
  \end{defi}

\begin{tm}\label{tm2realroots}
  {\rm (see \cite{Koconnect}).} For $d$ even and $pos+neg=2$,

  (1) A given compatible couple is realizable if
  and only if it does not correspond to Case 1) or~2).

  (2) If the constant term is positive (hence $(pos, neg)=(2,0)$ or $(0,2)$)
  and one is not in Case 1) or~2), a given compatible couple
  is realizable by polynomials having any
  ratio different from $1$ between the moduli of the two real roots.

  (3) If the constant term is negative (hence $(pos, neg)=(1,1)$) and there
  are two monomials of odd degree with coefficients of opposite signs, then
  such a compatible couple is realizable by polynomials with any ratio
  of the moduli $\alpha$ and $\beta$ of its positive and negative root
  respectively.

  (4) If the constant term is negative and all coefficients of monomials
  of odd degree are positive (resp. negative), then such a compatible couple
  is realizable by polynomials with any ratio $\alpha /\beta <1$
  (resp. $\alpha /\beta >1$) and not realizable by polynomials with
  $\alpha /\beta \geq 1$ (resp. $\alpha /\beta \leq 1$).
  \end{tm}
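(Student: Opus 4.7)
My plan is to split the theorem into its negative assertions (non-realizability in Cases~1 and~2 and the ratio restriction in (4)) and its positive assertions (realizability of all other compatible couples, with the prescribed ranges of ratios of moduli in (2)--(4)). The negative half is short and rigid; the positive half is where I would expect most of the technical work.

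For the non-realizability in (1), I would use the decomposition $Q_d(x)=E(x^2)+xO(x^2)$ into its even and odd parts. In Case~1 every coefficient of $O$ is positive, so $O(y)>0$ for all $y\geq 0$. If $Q_d$ were to realize $(pos,neg)=(2,0)$ with positive roots $\alpha_1<\alpha_2$, then $Q_d(\alpha_i)=0$ would give $E(\alpha_i^2)=-\alpha_iO(\alpha_i^2)<0$. On the other hand $Q_d$ is monic of even degree and has $\alpha_1,\alpha_2$ as its only real roots, so $Q_d(x)>0$ for $x<\alpha_1$; in particular $Q_d(-\alpha_i)>0$, and hence $2E(\alpha_i^2)=Q_d(\alpha_i)+Q_d(-\alpha_i)>0$, a contradiction. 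Case~2 is obtained from Case~1 by the substitution $x\mapsto -x$. The same identity handles the ratio bound in~(4): when $(pos,neg)=(1,1)$, all odd coefficients are positive, and the real roots are $\alpha>0$ and $-\beta<0$, one has $Q_d(\alpha)-Q_d(-\alpha)=2\alpha O(\alpha^2)>0$, so $Q_d(-\alpha)<0$, which places $-\alpha$ strictly between the two real roots and forces $\alpha<\beta$.

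For the realizability halves I would argue by explicit construction, writing $Q_d(x)=P(x)R(x)$ with $P$ a monic degree-$2$ factor carrying the prescribed real roots (and their prescribed ratio of moduli, subject to the bounds in~(4)) and $R$ a positive polynomial of degree $d-2$. Taking $R$ as a product $\prod_i(x^2+b_ix+c_i)$ with each $c_i>b_i^2/4$ guarantees that $R$ is strictly positive on $\mathbb{R}$, and the coefficients of $Q_d$ depend smoothly on the parameters $b_i,c_i$. Outside Cases~1 and~2 (or, in part~(3), when two odd coefficients of opposite signs are available), a dimension count then shows that one can steer each coefficient of $Q_d$ independently through zero, so every sign orthant not blocked by the argument above is attained. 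This is most conveniently organised by induction on $d$, reducing to the low-degree base cases (essentially $d=4$), which can be checked by hand. The main obstacle is this constructive half: one must verify that perturbing the $b_i$'s realises every admissible sign pattern without creating spurious real roots of $R$, and the verification has to be run separately in each of the four geometric subcases distinguished by the sign of $a_0$ and by the parity structure of the odd coefficients.
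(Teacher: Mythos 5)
Your proposal splits into a negative half and a positive half, and only the first of these is actually proved. (Note also that the present paper does not prove Theorem~\ref{tm2realroots} at all: it is quoted from~\cite{Koconnect}, so the attempt has to stand on its own.) The negative half is correct and cleanly done: with $Q_d(x)=E(x^2)+xO(x^2)$, in Case~1 all coefficients of $O$ are positive, so $Q_d(-\alpha_i)=Q_d(\alpha_i)-2\alpha_iO(\alpha_i^2)<0$, while $-\alpha_i<\alpha_1$ and the fact that a monic even-degree polynomial whose only real roots are $\alpha_1<\alpha_2$ is positive on $(-\infty,\alpha_1)$ give $Q_d(-\alpha_i)>0$ --- a contradiction; Case~2 follows by $x\mapsto-x$, and the same identity gives the strict inequality $\alpha<\beta$ (including exclusion of $\alpha=\beta$) needed for the non-realizability claim in part~(4).

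The realizability statements --- the ``if'' direction of (1) and all of the existence claims in (2)--(4), with the prescribed ranges of the ratio of moduli --- are where essentially all the content of the theorem lies, and for these you give only a plan. Writing $Q_d=P\,R$ with $P$ quadratic and $R=\prod_i(x^2+b_ix+c_i)$, $c_i>b_i^2/4$, merely parametrizes the relevant set; the step ``a dimension count then shows that one can steer each coefficient of $Q_d$ independently through zero, so every sign orthant not blocked is attained'' is not an argument. Which orthants the image of this parametrization meets is exactly the question being asked, independence of the coefficient signs is exactly what fails in Cases~1 and~2, and no soft dimension count can distinguish the blocked orthants from the attainable ones or control the ratio of moduli at the same time. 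The proposed induction on $d$ is likewise not set up: multiplying a degree-$(d-2)$ realizing polynomial by a positive quadratic changes its sign pattern in an uncontrolled way, and you do not explain how an arbitrary target pattern (and target ratio) at degree $d$ is reached from the inductive hypothesis. What is needed is an explicit construction in the spirit of Sections~\ref{secprtm21} and~\ref{secprtmD} of this paper: start from a sparse polynomial with few nonzero monomials whose real roots realize the required configuration and ratio, verify by Descartes' rule that the remaining roots are complex, then add $\eta P_1$ with $0<\eta\ll1$, where $P_1$ carries the full target sign pattern, and run this case by case over the admissible patterns. Until such constructions are supplied, parts (2), (3) and the positive part of (4) remain unproved.
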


To formulate the new results about the situation with $pos+neg=3$
we introduce the following notion:

\begin{defi}\label{defiZaction}
  {\rm For a given degree $d$, the
    $\mathbb{Z}_2\times \mathbb{Z}_2$-{\em action}
    on the set of compatible couples is defined by two commuting involutions.
    The first
    of them maps a polynomial $Q_d$ into $(-1)^dQ_d(-x)$ (this changes the pair
    $(pos, neg)$ into $(neg, pos)$, it changes the signs of the coefficients of
    $x^{d-1}$, $x^{d-3}$, $\ldots$ and preserves the signs of the other
    coefficients). The second involution maps $Q_d$ into $x^dQ_d(1/x)/Q_d(0)$
    (the pair $(pos, neg)$ is preserved and the sign pattern, eventually multiplied by $-1$, is read from
    the right; the roots of $x^dQ_d(1/x)/Q_d(0)$ are the reciprocals of
    the roots of $Q_d$).
An {\em orbit} of the
    $\mathbb{Z}_2\times \mathbb{Z}_2$-action consists of
    $2$ or $4$ compatible couples which are simultaneously realizable
    or not. This
    allows to formulate the results only for one of the $2$ or $4$ couples of
    a given orbit.}
  \end{defi}

    \begin{tm}\label{tm21}
 Suppose that the pair $(2,1)$ is compatible with the
 sign pattern $\sigma _{\triangle}$ (hence the constant term is positive).
 Then

      (1) The couple $\mathcal{C}:=(\sigma _{\triangle},(2,1))$ is realizable.

  Denote by $-\beta<0$, $\alpha _1>0$ and $\alpha _2>0$
  the three
  real roots of a polynomial realizing the couple
  $\mathcal{C}$.

  (2) If there are monomials
  $x^{2m}$ and $x^{2n-1}$ with negative coefficients (one can have
  $2m<2n-1$ or $2n-1<2m$), then for any of the five
  possibilities

  $$\beta <\alpha _1<\alpha _2~,~~~\, \, \beta =\alpha _1<\alpha _2~,~~~\, \,
  \alpha _1<\beta <\alpha _2~,~~~\, \, \alpha _1<\alpha _2=\beta ~~~\, \,
  {\rm and}~~~\, \, 
  \alpha _1<\alpha _2<\beta~,$$
  there exist polynomials realizing the couple $\mathcal{C}$.

  (3) If all odd monomials have positive coefficients, then only the
  possibility $\beta <\alpha _1<\alpha _2$ is realizable. 

(4) If all even monomials have positive coefficients, then only the
  possibility $\alpha _1<\alpha _2<\beta$ is realizable.
    \end{tm}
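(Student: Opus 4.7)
The strategy is first to reduce~(4) to~(3) via the involution $\iota_2:Q_d\mapsto x^d Q_d(1/x)/Q_d(0)$ from Definition~\ref{defiZaction}. Since $pos+neg=3$ forces $d$ odd, $\iota_2$ maps every odd-degree coefficient of $Q_d$ to an even-degree coefficient of the image (with the same sign, as $Q_d(0)>0$), fixes the pair $(pos,neg)=(2,1)$, and inverts the moduli of the roots. Thus the hypothesis and conclusion of~(4) are the $\iota_2$-images of those of~(3), and it suffices to prove (1), (2), and~(3).

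For~(3), I would decompose $Q_d(x)=E(x^2)+xO(x^2)$ with $O(y)=\sum_k a_{2k+1}y^k$. By hypothesis $O$ has only positive coefficients, so $O(y)>0$ for $y\ge 0$ and $Q_d(x)-Q_d(-x)=2xO(x^2)>0$ for $x>0$; in particular $Q_d(-\alpha_i)<Q_d(\alpha_i)=0$ for $i=1,2$. On the other hand, the factorization $Q_d(x)=(x-\alpha_1)(x-\alpha_2)(x+\beta)R(x)$, with $R(x)=\prod_j((x-u_j)^2+v_j^2)$ collecting the complex-conjugate-pair quadratics (so $R>0$ on $\mathbb{R}$), gives
\[Q_d(-\alpha_i)=(-\alpha_i-\alpha_1)(-\alpha_i-\alpha_2)(\beta-\alpha_i)R(-\alpha_i).\]
The first two factors have positive product and $R(-\alpha_i)>0$, so $Q_d(-\alpha_i)$ has the sign of $\beta-\alpha_i$; the inequality $Q_d(-\alpha_i)<0$ then forces $\beta<\alpha_i$ for $i=1,2$, which yields $\beta<\alpha_1<\alpha_2$.

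For~(1), Proposition~1 of~\cite{KoSe} furnishes a hyperbolic polynomial $H$ with $\sigma(H)=\sigma_\triangle$; by compatibility it has $c\ge 2$ positive and $p\ge 1$ negative roots with $c$ even and $p$ odd. I retain two positive and one negative root of $H$, pair up the remaining $c-2$ positive and $p-1$ negative ones, and deform each pair (through a double real root) into a complex-conjugate pair with small imaginary part. For sufficiently small deformations the coefficients remain inside the open chamber defined by $\sigma_\triangle$, so the resulting polynomial realizes~$\mathcal{C}$.

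For~(2), the hypothesis supplies strictly more freedom than in~(3) and~(4), and I would produce a witness for each of the five orderings. The extreme orderings $\beta<\alpha_1<\alpha_2$ and $\alpha_1<\alpha_2<\beta$ are handled by tailoring the construction of~(1), choosing appropriately which roots of the starting hyperbolic template are retained. The coincidence orderings $\beta=\alpha_i$ are obtained by inserting a factor $x^2-\alpha_i^2$, which forces the two moduli to coincide while leaving enough parameters to match $\sigma_\triangle$. The main obstacle is the middle ordering $\alpha_1<\beta<\alpha_2$: here I would start from an extreme-ordering witness and continuously move $\beta$ across $\alpha_1$ while adjusting the coefficients of the quadratic factors in the complex-pair product $R(x)$ so that no coefficient of $Q_d$ passes through zero along the path; this adjustment is possible precisely because, by hypothesis, both parities of coefficients contain a strictly negative entry and thus provide the needed degrees of freedom.
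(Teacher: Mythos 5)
Your parts (3) and (4) are sound. The reduction of (4) to (3) via the second involution of Definition~\ref{defiZaction} is exactly what the paper does, and your argument for (3) --- comparing $Q_d(\alpha_i)$ with $Q_d(-\alpha_i)$ through the odd part and then reading the sign of $Q_d(-\alpha_i)$ off the factorization $(x-\alpha_1)(x-\alpha_2)(x+\beta)R(x)$ --- is a clean equivalent of the paper's comparison of $W(\alpha_1)$ with $W(-\alpha_1)$; it even disposes of the case $\beta=\alpha_i$ automatically. The realizability of the surviving ordering $\beta<\alpha_1<\alpha_2$ in (3) does follow from part (1), so nothing is missing there \emph{provided} part (1) is actually proved.

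The genuine gap is in part (1), and it propagates into part (2). Starting from a hyperbolic $H$ with $\sigma(H)=\sigma_\triangle$ and ``deforming each surplus pair of real roots through a double root into a complex-conjugate pair'' is not a small deformation in coefficient space: two distinct real roots must travel a finite distance before they collide, and during that travel nothing prevents a coefficient from passing through zero. Your phrase ``for sufficiently small deformations the coefficients remain inside the open chamber'' covers only the last step (splitting an already-formed double root into a complex pair); it covers neither the collision nor the polynomial with the double root itself, and Proposition~1 of \cite{KoSe} gives no control over how close together the roots of $H$ may be taken. This is not a repairable technicality: if the argument were valid, it would show that \emph{every} pair $(pos,neg)$ obtained from $(c,p)$ by subtracting even numbers is realizable with any compatible sign pattern, which is false --- Theorem~\ref{tmDbis} of this very paper (and already Grabiner's degree-$4$ example) exhibits compatible couples that are not realizable. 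The paper instead proves (1) with an explicit few-term model ($\varepsilon x^d-x^{2m}+1$, resp.\ $x^d-x^{2m+1}+\varepsilon$) that visibly has exactly three simple real roots, then adds $\eta P_1$ with $0<\eta\ll\varepsilon$ and $\sigma(P_1)=\sigma_\triangle$ to restore all the signs; part (2) is handled by the analogous model $x^d-Ax^{2m}-Bx^{2n-1}+C$ tuned so that $V(1)=V'(1)=V(-1)=0$ and then perturbed to produce each of the five orderings. Your sketch for (2) --- moving $\beta$ across $\alpha_1$ ``while adjusting the coefficients so that none passes through zero'' --- likewise asserts rather than proves the key point; the existence of such a path inside the sign-pattern chamber is precisely the kind of statement these theorems are about and must be exhibited, not postulated.
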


    The theorem is proved in Section~\ref{secprtm21}. The compatibility of
    the sign pattern with the pair $(2,1)$ implies that in part (3) (resp.
    in part (4)) of the theorem there is at least one even (resp. odd)
    monomial whose coefficient is negative.

    \begin{nota}\label{notaD}
  {\rm For $d$ odd, we denote by $D(a,b,c)$ the sign pattern consisting of
    $2a$ pluses followed by $b$ pairs ``$-,+$'' followed by $2c$ minuses,
    where $1\leq a$, $1\leq b$, $1\leq c$ and $2a+2b+2c=d+1$.}
\end{nota}

    \begin{tm}\label{tmD}
      Suppose that the pair $(3,0)$ is compatible with the
  sign pattern $\sigma _{\diamond}$ which is not of the form $D(a,b,c)$.
  Then the couple $(\sigma _{\diamond},(3,0))$ 
  is realizable.
    \end{tm}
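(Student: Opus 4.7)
The plan is to construct, for each sign pattern $\sigma_\diamond$ compatible with $(3,0)$ and not of the form $D(a,b,c)$, a monic polynomial $Q$ of degree $d$ realizing the couple. Compatibility of $(3,0)$ forces the constant term of $Q$ to have sign $(-1)^3=-$, the number of sign changes of $\sigma_\diamond$ to be odd and $\geq 3$, and the number of sign preservations to be even. Since these two counts sum to $d$, it follows that $d$ is odd, which is exactly the parity regime in which $D(a,b,c)$ of Notation~\ref{notaD} is defined, so the exclusion in the hypothesis is meaningful.

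My first reduction is to invoke the second involution of the $\mathbb{Z}_2\times\mathbb{Z}_2$-action from Definition~\ref{defiZaction}. Reciprocation of roots preserves positivity, hence preserves the pair $(3,0)$, while at the level of coefficients it reverses $\sigma_\diamond$ and (because the constant term is negative) also flips all signs. A pattern $D(a,b,c)$ is sent to $D(c,b,a)$, so the hypothesis ``not $D$'' is preserved. Modulo this involution I may thus assume that the length of the initial $+$-block of $\sigma_\diamond$ is at most the length of the terminal $-$-block.

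I would then seek $Q$ in the factored form
\[
Q(x)=(x-\alpha_1)(x-\alpha_2)(x-\alpha_3)\cdot R(x),\qquad 0<\alpha_1<\alpha_2<\alpha_3,
\]
with $R$ a monic degree-$(d-3)$ polynomial having no real roots (a product of quadratic factors $x^2+b_ix+c_i$ with $b_i^2<4c_i$; such an $R$ is strictly positive on $\mathbb{R}$, so the real roots of $Q$ are exactly the three $\alpha_i$). The key tool is the standard concatenation technique: starting from a base building block realizing a short pattern with three simple positive roots (for instance $(x-1)(x-2)(x-3)$, which realizes $(+,-,+,-)$), one multiplies successively by rescaled no-real-root polynomials of controlled sign pattern. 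Taking the rescaling parameter sufficiently small or large makes the sign pattern of the product a controlled concatenation of the blocks, with a unique locally forced sign at each junction. Iterating, one can assemble any sign pattern that admits a valid block decomposition.

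The main obstacle is a combinatorial lemma identifying a sufficient structural property that distinguishes $D(a,b,c)$ from every other compatible pattern. I would argue by cases: a non-$D$ pattern must either have an initial $+$-block of odd length, a terminal $-$-block of odd length, or a repeated symbol somewhere in the middle portion breaking the pure alternation of $(-+)^b$. In each of these three archetypes one can locate a position in which to insert the base block $(+,-,+,-)$ and flank it by appropriate no-real-root factors, thereby producing the target pattern via the concatenation technique. Carrying out the case analysis carefully, choosing the right building blocks and rescaling parameters, and verifying the sign-pattern bookkeeping at each junction is the technical heart of the argument.
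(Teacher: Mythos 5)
Your preliminary observations (the parity of $d$, the action of the reciprocal involution sending $D(a,b,c)$ to $D(c,b,a)$, and the trichotomy describing non-$D$ patterns) are correct, but the engine you propose does not prove the theorem, and the part you postpone as ``the technical heart'' cannot be carried out for the mechanism you chose. In a product of scale-separated factors the real roots of the factors add up, so your concatenation method can only reach couples $(\sigma _{\diamond},(3,0))$ whose sign pattern can be cut into consecutive segments, each segment (normalized so as to begin with $+$) being realizable with some pair $(pos_i,0)$, $\sum _i pos_i=3$; in particular every segment must have an even number of sign preservations, i.e. $d_i\equiv c_i \pmod 2$ (no negative roots are allowed anywhere), every cut point must respect this parity, and each segment's couple must itself be realizable. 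Such a decomposition need not exist for a non-$D$ compatible pattern. Take $d=7$ and $\sigma _{\diamond}=(+,+,-,-,-,+,-,-)$: it is compatible with $(3,0)$ (here $c=3$) and is not of the form $D(a,b,c)$, yet the parity condition admits exactly one interior cut, after the fourth sign, and no multi-cut decomposition either. The left segment $(+,+,-,-)$ has a single sign change and can carry at most one positive root, so the right segment, normalized to $(+,+,-,+,+)$, would have to realize $(2,0)$; but that couple is precisely Case 1) of Theorem~\ref{tm2realroots} and is not realizable. Hence no concatenation of your type -- one block with the three positive roots and no-real-root blocks, or any distribution such as $(1,0)+(2,0)$ -- produces this couple, although it is realizable. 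So the combinatorial lemma you defer is not merely unproved: for your construction it is false.

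The paper's proof uses a different device, which is what you would need here: instead of multiplying blocks, it perturbs sparse polynomials. Depending on which monomials with prescribed signs occur in $\sigma _{\diamond}$ (an even-degree monomial signed $-$ of higher degree than an even-degree monomial signed $+$; or a triple of degrees $2\nu >2\mu -1>2\theta$ signed $-,+,-$; or two odd-degree monomials signed $-$ and $+$), one writes down a three- or four-term polynomial such as $-x^{2m}+Ax^{2p}-B+\varepsilon x^d$ having exactly three simple positive roots and no other real roots, and then adds $\eta P_4$ with $0<\eta \ll \varepsilon$, where $P_4$ is an arbitrary polynomial with $\sigma (P_4)=\sigma _{\diamond}$; this switches on all coefficients with the required signs while preserving the root configuration. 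The case analysis closes by showing that if none of these monomial configurations is present, the pattern is forced to be $D(a,b,c)$. For the pattern above, $-x^4+2x^2-1+\varepsilon x^7+\eta P_4$ already does the job, while no product of the kind you describe can.
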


    The theorem is proved in Section~\ref{secprtmD}.

    \begin{tm}\label{tmDbis}
      For $j=1$, $2$, $\ldots$, $b$, the couple
      $(D(a,b,c),(2j+1,0))$ is not realizable.
    \end{tm}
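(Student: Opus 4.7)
I would proceed by contradiction. Suppose $Q$ is monic of degree $d = 2a+2b+2c-1$, with sign pattern $D(a,b,c)$, and realizes $(2j+1,0)$ for some $j\in\{1,\ldots,b\}$. Factor $Q = PR$, where $P(x)=\prod_{i=1}^{2j+1}(x-r_i)$ with all $r_i>0$ and $R(x)=\prod_{k=1}^{\nu}(x^2+p_k x+q_k)$ with $p_k^2<4q_k$, so $R(x)>0$ for every real $x$. Let $\sigma_i$ denote the $i$-th elementary symmetric function of $r_1,\ldots,r_{2j+1}$; all $\sigma_i>0$.

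My strategy is reduction-plus-base-case. For the reduction, observe that the two trailing coefficients of $Q$ both lie in the tail block of $2c$ minuses of $D(a,b,c)$, so double differentiation strips exactly them and produces $Q''$ whose sign pattern is precisely $D(a,b,c-1)$ whenever $c\geq 2$. Rolle's theorem applied twice guarantees $Q''$ at least $2j-1$ distinct positive real roots. I would then induct on $c$, combined with the involution $Q(x)\mapsto x^{d}Q(1/x)/Q(0)$ (which interchanges $D(a,b,c)\leftrightarrow D(c,b,a)$ while preserving $(pos,neg)$, so one may assume $a\leq c$), to settle the step $j\geq 2$: the inductive hypothesis excludes $(D(a,b,c-1),(pos'',0))$ for odd $pos''\geq 3$, forcing $pos''=1$ and contradicting $pos''\geq 2j-1\geq 3$. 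The scenario in which $Q''$ acquires negative real roots is controlled by the sign-preservation count of $D(a,b,c-1)$ and by iterating the reduction.

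For the base case $a=c=1$, reached after the reductions, I would read off constraints from $Q=PR$ directly. The extremal sign conditions $\mathrm{sign}(a_{d-1})=+$ and $\mathrm{sign}(a_1)=-$ translate respectively to $\sum_k p_k>\sigma_1$ and $\sum_k p_k/q_k>\sigma_{2j}/\sigma_{2j+1}$. Setting $u_k=\sqrt{q_k}$ and using $p_k<2u_k$ together with $p_k/q_k<2/u_k$ (both from $p_k^2<4q_k$), these become $\sum u_k>\sigma_1/2$ and $\sum 1/u_k>\sigma_{2j}/(2\sigma_{2j+1})$. When $\nu=1$, multiplying the two bounds gives $\sigma_1\sigma_{2j}<4\sigma_{2j+1}$, which contradicts the Newton / AM--HM inequality $\sigma_1\sigma_{2j}=\sigma_{2j+1}\cdot(\sum 1/r_i)(\sum r_i)\geq (2j+1)^2\sigma_{2j+1}$ for positive reals, since $(2j+1)^2\geq 9>4$. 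For $\nu\geq 2$ I would supplement these with the extra sign conditions coming from the interior alternating block of $D(a,b,c)$, extracting further Newton-type inequalities among the $u_k$ to close the contradiction.

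The hard part is exactly this $\nu\geq 2$ regime of the base case: the product $(\sum u_k)(\sum 1/u_k)$ is only bounded \emph{below} by $\nu^2$ (via AM--HM) and admits no comparable upper bound, so the two extremal sign constraints alone cannot force a contradiction. Unpacking the middle-block sign conditions, tracking how they propagate through the convolution $Q=PR$, and marshalling them into a coherent system of inequalities on the pairs $(p_k,q_k)$ is where I expect the real work to lie.
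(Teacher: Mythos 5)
Your proposal has several genuine gaps and does not close the argument. The most serious one is in the reduction step: differentiating twice does give $Q''$ the sign pattern $D(a,b,c-1)$ and, by Rolle, at least $2j-1$ distinct positive roots, but nothing forces $Q''$ to have \emph{no negative roots}. Descartes' rule only bounds the number of negative roots of $Q''$ from above by the sign-preservation count of $D(a,b,c-1)$, which is $2a+2c-4\geq 0$; it does not exclude that $Q''$ realizes a pair $(2j-1,2r)$ with $r\geq 1$ (and such pairs are in fact realizable for $D$-type patterns, as noted in the remark following the theorem). So the inductive hypothesis, which concerns only pairs $(\mathrm{odd},0)$, cannot be invoked, and the phrase ``controlled by the sign-preservation count'' papers over exactly the point that would need proof. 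A second structural gap: for $j=1$ the Rolle reduction yields only $2j-1=1$ positive root of $Q''$ and hence no contradiction, so the entire case $(3,0)$ for general $(a,b,c)$ — which is the heart of the theorem — is never reduced to your base case and is not treated anywhere in the proposal. Third, even in the base case $a=c=1$ you prove the contradiction only for $\nu=1$ (a single quadratic factor); you candidly state that the $\nu\geq 2$ regime is open, and it is: the two extremal sign conditions give lower bounds on $\sum u_k$ and $\sum 1/u_k$, and $(\sum u_k)(\sum 1/u_k)$ has no useful upper bound, so no contradiction with Newton/AM--HM is available. The interior sign conditions do not obviously convert into a workable system of inequalities on the pairs $(p_k,q_k)$ (note also that the $p_k$ need not be positive, so several of the natural term-by-term estimates are unavailable).

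For comparison, the paper avoids both the factorization $Q=PR$ and any induction on $(a,b,c)$. It splits $P$ into its odd and even parts $P_o$, $P_e$ and observes that the structure of $D(a,b,c)$ forces each of the two coefficient subsequences to have exactly one sign change, hence $P_o$ has exactly one positive root $x_o$ and $P_e$ exactly one positive root $x_e$. A short case analysis on the signs of $P_o$ and $P_e$ between the first two positive roots of $P$ pins down the configuration, and after normalizing the smallest positive root to $1$ the key step is Lemma~\ref{lmlm}: all derivatives $P^{(m)}(1)$ are positive, proved by an extremal argument that concentrates each signed block of coefficients into a single monomial and reduces the verification to a four-term polynomial. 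Positivity of the Taylor coefficients at $1$ then forbids any further positive roots, contradicting the existence of $\xi_2<\xi_3$. If you want to salvage your approach, the missing ingredients are precisely (i) a mechanism replacing the uncontrolled differentiation step and (ii) a complete treatment of the case $(3,0)$ with arbitrarily many quadratic factors; the paper's odd/even decomposition supplies both at once.
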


    The theorem is proved in Section~\ref{secprtmDbis}. Its proof resembles
    the proof of part~(i) of Theorem~4 in~\cite{KoSh} which treats a particular
    case of Theorem~\ref{tmDbis}. However the proof of Lemma~\ref{lmlm} (used
    in the proof of Theorem~\ref{tmDbis}) 
    is more complicated than the proof of its analog which is
    Lemma~6 of~\cite{KoSh}. This renders indispensable
    giving the whole proof of Theorem~\ref{tmDbis}.

    \begin{rem}
      {\rm For the sign pattern $D(a,b,c)$, compatible are the following pairs
        $(pos, neg)$:

        1) the ones mentioned in Theorem~\ref{tmDbis};

        2) the pair $(1,0)$;

        3) the pairs $(2j+1,2r)$, $r=1$, $2$, $\ldots$, $a+c-1$,
        $j=0$, $1$, $\ldots$, $b$.

        Realizability of the couples $(D(a,b,c),(pos,neg))$ with $(pos,neg)$
        as in 2) and 3) can be proved by analogy with the proof of parts (ii)
      and (iii) of Theorem~4 in~\cite{KoSh}.}
    \end{rem}

\section{Proof of Theorem~\protect\ref{tmbasic}\protect\label{secprtmbasic}}

\begin{proof}[Part (1)]
A) For $d\geq 6$, the set $\Pi ^*_d(\sigma _{\bullet})$ is non-empty, see
Proposition~1 in~\cite{KoSe}. Fix
a polynomial $Q^*\in \Pi ^*_d(\sigma _{\bullet})$. By Proposition~1
of~\cite{KoSe}, one can choose $Q^*$ such that 
      the moduli of its positive and negative roots (denoted by
      $\alpha _1<\alpha _2<\alpha _3<\alpha _4$ and
      $\beta _1<\beta _2<\cdots <\beta _{d-5}<\beta _{d-4}$
      respectively) satisfy the string of inequalities

      \begin{equation}\label{eqalphabeta}
        \alpha _1<\alpha _2<\beta _1<\beta _2<\cdots <\beta _{d-5}<\beta _{d-4}
        <\alpha _3<\alpha _4~.
        \end{equation}
      So the negative roots of $Q^*$ are
      $-\beta _{d-4}<-\beta _{d-5}<\cdots <-\beta _1<0$.
      Starting with $Q^*$, we construct two polynomials $Q^1$ and $Q^2$
      of the set $A(\sigma _{\bullet},(2,d-4))$ (so this set is non-empty)
      about which we show that they belong to different components of
      $R_{3,d}$. This implies the theorem.
      \vspace{1mm}
      
      B) We consider the one-parameter
      family of polynomials

      $$\tilde{Q}_t:=Q^*+tx^2(x+\beta _1)(x+\beta _2)\cdots (x+\beta _{d-4})~,
      ~~~t\geq 0~.$$
      For
      any $t\geq 0$, one has $\sigma (\tilde{Q}_t)=\sigma _{\bullet}$.
      As $t$ increases, the roots $-\beta _1$, $-\beta _2$, $\ldots$,
      $-\beta _{d-4}$ of
      $\tilde{Q}_t$ do not move. The roots $\alpha _1$ and $\alpha _3$
      move to the right while $\alpha _2$ and $\alpha _4$ move to the left. For
      some $t_0>0$, either $\alpha _1$ coalesces with $\alpha _2$ or
      $\alpha _3$ coalesces with $\alpha _4$ or both these things take place.
      Indeed, the values of $\tilde{Q}_t$ for each fixed $x\geq \alpha _1$ 
      increase
      at least as fast as
      $t\alpha _1^2\prod _{i=1}^{d-4}(\alpha _1+\beta _i)$.

      If for $t=t_0$,
      $\alpha _1$ and $\alpha _2$ coalesce and $\alpha _3$ and $\alpha _4$
      remain positive and distinct, then one can fix $t_1>t_0$ sufficiently
      close to $t_0$ for which the roots $\alpha _1$ and $\alpha _2$
      have given birth
      to a complex conjugate pair while $\alpha _3$ and $\alpha _4$ are still
      positive and distinct. We set $Q^1:=\tilde{Q}_{t_1}$. Hence the
      polynomial $Q^1$ has $d-2$ real roots 

      \begin{equation}\label{eqQ1}
        \begin{array}{ll}
          -\beta _{d-4}<-\beta _{d-5}<\cdots <-\beta _1<0<\alpha _3<\alpha _4&
        {\rm such~that}\\ \\ 
        0<\beta _{1}<\beta _{2}<\cdots <\beta _{d-4}<\alpha _3<\alpha _4&
        \end{array}
      \end{equation}
      and a complex conjugate pair. After this we set $Q^2_*:=x^dQ^1(1/x)$.
      The sequence of coefficients of $Q^1$, when read from the right, is the
      string of coefficients of $Q^2_*$. After this we set $Q^2:=Q^2_*/Q^1(0)$, 
      so $Q^2$ is monic. The sign pattern
      $\sigma _{\bullet}$ is center-symmetric, therefore
      $\sigma (Q^2)=\sigma _{\bullet}=\sigma (Q^1)$.
      The roots of the polynomial
      $Q^2$ are the reciprocals of the roots of $Q^1$.
      The real roots of
      $Q^2$ satisfy the
      conditions

\begin{equation}\label{eqQ2}
  \begin{array}{ll}-\beta _{d-4}<-\beta _{d-5}<\cdots <-\beta _1<0<\alpha _3<
    \alpha _4&{\rm and}\\ \\ 
    0<\alpha _1<\alpha _2<\beta _{1}<\beta _{2}<\cdots <\beta _{d-4};
    \end{array}
        \end{equation}
the polynomial $Q^2$ has also a complex conjugate pair.

If for $t=t_0$, $\alpha _3$ and $\alpha _4$ coalesce while $\alpha _1$ and
$\alpha _2$ remain positive and distinct, then for some $t_1>t_0$ sufficiently
close to $t_0$ we obtain the polynomial $Q^2$ with exactly two positive
and $d-4$ negative roots which satisfy conditions (\ref{eqQ2}). After this we
set $Q^1_*=x^dQ^2(1/x)$ and $Q^1:=Q^1_*/Q^2(0)$. The real roots of $Q^1$
satisfy conditions (\ref{eqQ1}).

Finally, if for $t=t_0$, one has $\alpha _1=\alpha _2=a>0$ and
$\alpha _3=\alpha _4=b>a$, then one constructs the polynomials

$$Q^{\pm}:=\tilde{Q}_{t_0}\pm \varepsilon (x-(a+b)/2)~,~~~\varepsilon >0~.$$
For $\varepsilon$ small enough,
\vspace{1mm}

1) the coefficients of $Q^{\pm}$ are non-zero and
$\sigma (Q^{\pm})=\sigma _{\bullet}$;  
\vspace{1mm}

2) each of the polynomials $Q^{\pm}$ has
$d-4$ distinct negative roots close to $-\beta _i$;
\vspace{1mm}

3) $Q^+$ has two distinct positive roots close to~$a$
and a complex conjugate pair close to~$b$;
\vspace{1mm}

4) and vice versa for $Q^-$.
\vspace{1mm}

We set $Q^1:=Q^-$ and $Q^2:=Q^+$. 
\vspace{1mm}

C) Suppose that the two polynomials $Q^1$ and $Q^2$ belong to one and the same
component of the set $R_{3,6}$. Then it is possible to connect
them by a continuous path (homotopy) within this component:
$Q^s$, $s\in [1,2]$. Along the path the two
positive, the $d-4$ negative and the two complex conjugate roots of $Q^s$
depend continuously on~$s$ while remaining distinct throughout the homotopy.
We denote the negative roots by
$-\tilde{\beta}_j$, $j=1$, $\ldots$, $d-4$, and the two positive roots by
$\tilde{\gamma}_j$,
$j=1$, $2$, where

$$\begin{array}{lllll}
{\rm for}~~~&s=1~,&{\rm one~has}&\tilde{\beta}_j=\beta _j~,&
\tilde{\gamma}_j=\alpha _{2+j}~;\\ \\ 
      {\rm for}~~~&s=2~,&{\rm one~has}&\tilde{\beta}_j=\beta _j~,&
      \tilde{\gamma}_j=\alpha _j~.
    \end{array}$$
    Hence there exists $s=s_0\in (1,2)$ such that for $s=s_0$,
    $\tilde{\beta}_{d-4}=\tilde{\gamma}_2$. This means that the polynomial
    $Q^{s_0}$ has exactly $d-2$ real roots such that

    $$-\tilde{\beta}_{d-4}<\cdots <-\tilde{\beta}_1<0<\tilde{\gamma}_1<
    \tilde{\gamma}_2~~~,~~~\, \,
    \tilde{\beta}_{d-4}=\tilde{\gamma}_2~.$$
    Using a linear change $x\mapsto hx$, $h>0$, we achieve the condition
    $\tilde{\beta}_{d-4}=\tilde{\gamma}_2=1$.
    \vspace{1mm}

    D) Suppose that $d$ is even. The fact that $\pm 1$ are roots of $Q_d$
    implies the two conditions:

    $$a_d+a_{d-2}+a_{d-4}+\cdots +a_2+a_0=0~~~\, \, {\rm and}~~~\, \, 
    a_{d-1}+a_{d-3}+\cdots +a_3+a_1=0~.$$
    The first of them is possible only if all even coefficients are $0$,
    because in the corresponding positions the sign pattern $\sigma _{\bullet}$
    contains $(+)$-signs. However $a_d=1$. This contradiction means that the
    homotopy $Q^s$ does not exist, so $Q^1$ and $Q^2$ belong to different
    components of the set $R_{3,d}$ and the set $A(\sigma _{\bullet},(2,d-4))$
    is not connected. One can observe that this resoning is not valid for
    $d=2$ or $d=4$, because in these cases there are no negative roots at all.
    \vspace{1mm}
    
    E) Suppose that $d\geq 7$ is odd. Set $\delta :=\tilde{\beta}_{d-5}>0$ 
    and
    $Q_d^{s_0}=(x+\delta )U(x)$, where $U=x^{d-1}+\sum _{j=0}^{d-2}u_jx^j$.
    The polynomial $U$ has an even number of positive roots, so $u_0>0$. The
    conditions

    $$0>a_{d-1}=\delta +u_{d-2}~~~\, \, {\rm and}~~~\, \, \delta >0$$
    imply $u_{d-2}<0$ whereas from

    $$0<a_{d-2}=\delta u_{d-2}+u_{d-3}~,~~~\delta >0~~~\, \, {\rm and}~~~\, \,
    u_{d-2}<0$$
    one deduces that $u_{d-3}>0$. In the same way one has

    $$\begin{array}{ll}
      0>a_1=\delta u_1+u_0~,~~~\delta >0~,~~~u_0>0~,~~~\, \, {\rm so}~~~
      \, \, u_1<0&
      {\rm and}\\ \\
      0<a_2=\delta u_2+u_1~,~~~\delta >0~,~~~u_1<0~,~~~\, \, {\rm so}~~~
      \, \, u_2>0~.&
      \end{array}$$
    The first three and the last three of the coefficients of the polynomial
    $U(-x)$ are positive. By Descartes' rule of signs it has not more than
    $d-5$ positive roots, and it has exactly $d-5$ positive roots only if
    it has $d-5$ sign changes. On the other hand one knows that $U(-x)$ has
    exactly $d-5$ positive roots $-\tilde{\beta}_j$, $j=1$, $2$,
    $\ldots$, $d-6$, $d-4$.
    Hence $U(x)$ has $d-5$ sign preservations, therefore $u_k>0$ for
    $2\leq k\leq d-3$.

    Thus $\sigma (U)=\sigma _{\bullet}$ (but here the sign
    pattern $\sigma _{\bullet}$ is meant to be of length $d$, not $d+1$).
    Suppose that the homotopy $Q^s$ exists. Along this homotopy the root
    $-\tilde{\beta}_{d-5}$ is a continuous negative-valued function. As division
    of $Q^s$ by $x+\delta$ gives the polynomials $U$, there exists a homotopy
    between the polynomial $U$ corresponding to $Q^1$ and the one
    corresponding to $Q^2$. We denote them by $U^1$ and $U^2$. They are of
    even degree $d-1\geq 6$, each of them has exactly two positive roots
    $\tilde{\gamma}_1<\tilde{\gamma}_2$, exactly $d-5$ negative roots and
    one complex conjugate pair. For the moduli of the real roots one has

    $$\tilde{\beta}_j<\tilde{\gamma}_1~~~\, \, {\rm for}~~~\, \, U^1~~~\, \,
    {\rm and}
    ~~~\, \, \tilde{\gamma}_2<\tilde{\beta}_j~~~\, \, {\rm for}~~~\, \, U^2~,~~~
    j=1,~2,\ldots ~,~d-6,~d-4$$
    (see (\ref{eqQ1}) and (\ref{eqQ2})). This, however, is impossible, see~D).
\end{proof}

\begin{proof}[Part (2)]
  F) For $d=4$, for each polynomial $Q\in A(\sigma _{\bullet},(2,0))$, there
  exists a unique quantity $g>0$ such that for $g'\in [0,g)$, one has
    $Q+g'\in A(\sigma _{\bullet},(2,0))$ and for $g'=g$, the polynomial
    $Q+g'$ has a multiple positive root. 

    On the other hand, for each polynomial $Q\in A(\sigma _{\bullet},(2,0))$,
    there exists a unique quantity $h>0$ such that for $h'\in [0,h)$, one has
      $Q-h'\in A(\sigma _{\bullet},(2,0))$ and 
      for $h'=h$, $Q$ has either a zero root
      or a multiple positive root. 
      The
      quantities $g$ and $h$ are  continuous functions
      of the coefficients of~$Q$.
      
    Denote by $A^*(\sigma _{\bullet})$ the set of monic polynomials whose
    coefficients have signs as defined by the sign pattern $\sigma _{\bullet}$
    and which have a multiple positive root and a complex conjugate pair.
    Hence the set $A(\sigma _{\bullet},(2,0))$ is homeomorphic to the
    direct product of the set $A^*(\sigma _{\bullet})$ and an open interval.
    Therefore if $A^*(\sigma _{\bullet})$ is connected, then such is
    $A(\sigma _{\bullet},(2,0))$ as well.

    Denote by $A^*_0(\sigma _{\bullet})$ the subset of $A^*(\sigma _{\bullet})$
    for which the multiple root of $Q$ is at~$1$. Each polynomial
    $Q\in A^*(\sigma _{\bullet})$ can be transformed into a polynomial of
    $A^*_0(\sigma _{\bullet})$ by a linear change of the variable $x$ followed
    by a multiplication with a non-zero constant. Hence
    $A^*(\sigma _{\bullet})$ is homeomorphic to
    $A^*_0(\sigma _{\bullet})\times (0,\infty)$.

    Any polynomial $Q\in A^*_0(\sigma _{\bullet})$
    is of the form

    $$(x-1)^2(x^2+Ax+B)=x^4+(A-2)x^3+(B-2A+1)x^2+(A-2B)x+B~,$$
    where $A^2-4B\leq 0$.
    The set $A^*_0(\sigma _{\bullet})$ is defined by the
    conditions

    $$A<2~,~~~\, \, B-2A+1>0~,~~~\, \, A-2B<0~~~\, \, {\rm and}~~~\, \,
    B\geq A^2/4~.$$
    This is the set of points in the
    plane $(A,B)$ which are to the left of the vertical line $A=2$ and above or on 
    the graph of the function (of the argument $A\in (-\infty ,2)$)
    $\max (2A-1,A/2,A^2/4)$; strictly above for $A\in [0,2)$ and above or on the graph for $A<0$. This is a contractible set.
    \vspace{1mm}

    G) For $d=5$, we denote by $A^{\dagger}(\sigma _{\bullet})$
    the set of monic polynomials
    the signs of whose coefficients are defined by the sign pattern
    $\sigma _{\bullet}$ and which have a simple negative root,
    a double positive root and a complex conjugate pair. Denote by
    $A^{\dagger}_0(\sigma _{\bullet})$ its subset for which the double root is
    at~$1$. By complete analogy with part F) of the proof we show that
    connectedness of $A^{\dagger}_0(\sigma _{\bullet})$ implies the one of
    $A(\sigma _{\bullet},(2,1))$.

    Any polynomial $Q\in A^{\dagger}_0(\sigma _{\bullet})$ is of the form

    $$(x-1)^2(x+A)(x^2+Bx+C)=x^5+\sum _{j=0}^4f_jx^j~,~~~\, \,
      {\rm where}$$

      $$\begin{array}{ll}f_4=A+B-2~,&f_3=AB-2A-2B+C+1~,\\ \\ 
        f_2=-2AB+AC+A+B-2C~,&f_1=AB-2AC+C\\ \\ {\rm and}&f_0=AC~,
      \end{array}$$
    with $A>0$ and $B^2-4C<0$. For any $\rho >0$ and $r>0$, the polynomial
    $Q_{\rho ,r}:=Q+\rho (x-1)^2+rx^3(x-1)^2$ defines the sign pattern
    $\sigma _{\bullet}$ and
    belongs to the set $A^{\dagger}_0(\sigma _{\bullet})$. Indeed,
    it is non-negative for $x\geq 0$, with equality only for $x=1$;
    its second derivative at $x=1$ is positive, so $x=1$ is a double root;
    the sign pattern
    $\sigma _{\bullet}$ and Descartes' rule of sign imply that $Q_{\rho ,r}$
    has not more than one negative root, so it has exactly one such root.
    Hence one can choose
    $\rho$ and $r$ such that $f_2=f_3$. The set $A^{\dagger}_0(\sigma _{\bullet})$
    is connected if and only if its subset defined by the condition $f_2=f_3$
    is connected.
    \vspace{1mm}

    H) The condition $f_2=f_3$ allows to express $A$ as a function of $B$
    and~$C$:

    $$A=T_0/D~,~~~\, \, {\rm where}~~~\, \, T_0=3B-3C-1~~~\, \,
    {\rm and}~~~\, \, D=3B-C-3~.$$
    For the coefficients $f_i$ with $A=T_0/D$ one finds

    $$\begin{array}{ll}
      f_4=T_4/D~,&T_4=3B^2-BC-6B-C+5~,\\ \\
      f_3=f_2=T_3/D~,&T_3=-3B^2+2BC-C^2+2B+2C-1~,\\ \\
      f_1=T_1/D~,&T_1=3B^2-6BC+5C^2-B-C~.\end{array}$$
    In Fig.~\ref{fig45} and \ref{fig45bis} we represent the following sets:
    \vspace{1mm}
    
    -- $\mathcal{L}_0:T_0=0$ (in solid line) and
    $\mathcal{L}:D=0$ (in dashed line) are straight lines;
    \vspace{1mm}
    
    -- $\mathcal{E}_3:T_3=0$ (in dashed line) and $\mathcal{E}_1:T_1=0$
    (in dotted line) are
    ellipses;
    \vspace{1mm}
    
    -- $\mathcal{H}:T_4=0$ (in solid line) is a hyperbola;
    \vspace{1mm}

    -- $\mathcal{P}:C=B^2/4$ is a parabola (in dash-dotted line).

\begin{figure}[htbp]
  \centerline{\hbox{\includegraphics[scale=0.7]{}}}
  \caption{The set $A^{\dagger}_0(\sigma _{\bullet})$ subdued to the condition
    $f_2=f_3$ (global view).)}
\label{fig45}
\end{figure}

    \begin{rem}\label{remIntOut}
      {\rm As $C>0$, only the branch of $\mathcal{H}$ belonging to the upper
        half-plane is represented in Fig.~\ref{fig45} and~\ref{fig45bis}.
        The asymptotes of
        $\mathcal{H}$ are the lines $B=-1$ and $C=3B-6$. We denote by
        Int$(\mathcal{E}_i)$ and Out$(\mathcal{E}_i)$ the intersections
        with the half-plane $C>0$ of the interior and the
        exterior of the ellipse $\mathcal{E}_i$. By Int$(\mathcal{H})$ we denote
        the part of the upper half-plane which is above and by
        Out$(\mathcal{H})$ the part which is below the branch of $\mathcal{H}$
        with $C>0$. Notice that}

      $$\begin{array}{lll}
        {\rm Int}(\mathcal{E}_3):~T_3>0,~C>0,&
        {\rm Int}(\mathcal{E}_1):~T_1<0,~C>0,&
        {\rm Int}(\mathcal{H}):~T_4<0,~C>0,\\ \\
        {\rm Out}(\mathcal{E}_3):~T_3<0,~C>0,&
        {\rm Out}(\mathcal{E}_1):~T_1>0,~C>0,&
        {\rm Out}(\mathcal{H}):~T_4>0,~C>0.
      \end{array}$$
      {\rm The ellipse $\mathcal{E}_1$ intersects the $C$-axis at $(0,0)$ and
        $(0,1/5)$ while $\mathcal{E}_3$ is tangent to the $C$-axis at $(0,1)$.
        The leftmost point of the ellipse $\mathcal{E}_1$ is at
        
        $$((8-\sqrt{70})/12=-0.030\ldots ,(10-\sqrt{70})/20=0.081\ldots )~.$$The
        point $(4/3,1)$ is a common point for $\mathcal{L}$, $\mathcal{L}_0$,
        $\mathcal{H}$, $\mathcal{E}_1$ and $\mathcal{E}_3$.

        The intersecting lines $\mathcal{L}_1$ and $\mathcal{L}$
        define two pairs of opposite sectors. The ones of opening
        $>\pi /2$ are denoted by $\mathcal{S}_u:T_0<0,D<0$ (upper) and
        $\mathcal{S}_{\ell}:T_0>0,D>0$ (lower). One has $A>0$ exactly when
        the point $(B,C)$ belongs to one of these two sectors.}
    \end{rem}
    \vspace{1mm}

\begin{figure}[htbp]
\centerline{\hbox{\includegraphics[scale=0.7]{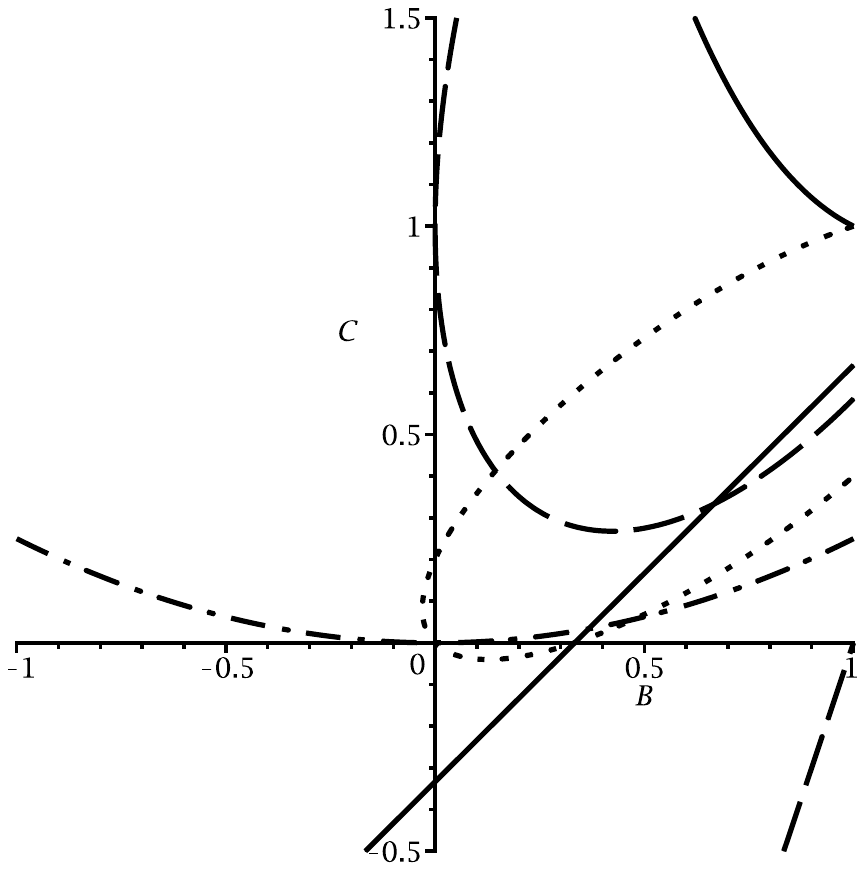}}}
    \caption{The set $A^{\dagger}_0(\sigma _{\bullet})$ subdued to the condition
    $f_2=f_3$ (local view).)}
\label{fig45bis}
\end{figure}

    I) The signs of the coefficients $f_i$ and of the quantities $A>0$ and $C>0$
    imply that one must have one of the two systems of conditions:

    $$\begin{array}{l}
      {\rm (i)}~:~(B,C)\in \mathcal{S}_{\ell}\cap {\rm Int}(\mathcal{E}_1)\cap
      {\rm Int}(\mathcal{E}_3)\cap {\rm Int}(\mathcal{H})~,~~~\, \,
      {\rm i.e.}\\ \\
      ~~~~~\, \, T_0>0~,~~~D>0~,~~~T_1<0~,~~~T_3>0~~~\, \, {\rm and}~~~\, \, T_4<0~~~\, \,
      {\rm or}\\ \\ 
    {\rm (ii)}~:~(B,C)\in \mathcal{S}_u\cap {\rm Out}(\mathcal{E}_1)\cap
      {\rm Out}(\mathcal{E}_3)\cap {\rm Out}(\mathcal{H})~,~~~\, \,
      {\rm i.e.}\\ \\ 
      ~~~~~\, \, T_0<0~,~~~D<0~,~~~T_1>0~,~~~T_3<0~~~\, \, {\rm and}~~~\, \, T_4>0~.
    \end{array}$$
    The possibility (i) is to be excluded. Indeed, one has

    $$\mathcal{E}_3\cap \mathcal{L}_0=\{ (2/3,1/3)~,~~~(4/3,1)\} ~~~\, \,
    {\rm and}~~~\, \,
    \mathcal{E}_3\cap \mathcal{L}=\{ (4/3,1)~,~~~(2,3)\} ~,$$
    see Fig.~\ref{fig45} and~\ref{fig45bis},
    so Int$(\mathcal{E}_3)$ intersects with
    $\mathcal{S}_u$, but not with $\mathcal{S}_{\ell}$.
    \vspace{1mm}

    J) We describe the set obtained in case (ii). For $B\leq -1$,
    this is the part of the upper plane which is above the parabola
    $\mathcal{P}$. For $-1<B<(8-\sqrt{70})/12$,
    this is its part between the parabola $\mathcal{P}$ from below and
    the hyperbola $\mathcal{H}$ from above, see Fig.~\ref{fig45}. For
    each $(8-\sqrt{70})/12\leq B<0$, this is the union of two intervals whose
    endpoints belong to $\mathcal{H}$ and $\mathcal{E}_1$ for the upper and to
    $\mathcal{E}_1$ and $\mathcal{P}$ for the lower interval. For
    $B\geq 0$, this is the union of two curvilinear triangles, each with
    one rectilinear side which is part of the $C$-axis. The above triangle
    has vertices at $(0,1)$, $(0,5)$ and
    $(0.34\ldots ,2.42\ldots )$. The latter point, together
    with $(4/3,1)$, is the
    intersection $\mathcal{H}\cap \mathcal{E}_3$. 

    The lower triangle has vertices at $(0,1/5)$, $(0,1)$ and
    $(0.14\ldots ,0.41\ldots )$.
    The latter point, together with $(4/3,1)$, is the intersection
    $\mathcal{E}_1\cap \mathcal{E}_3$.

    To see that there is no other point of the set defined in case (ii) with
    $B>0$, one has to observe the order on $\mathcal{P}$ of the intersection
    points of

    $$\begin{array}{l}\mathcal{P}\cap \mathcal{L}_0=\{ (0.36\ldots ,~0.03\ldots ),~
    (3.63\ldots ,~3.29\ldots )\}~~~\, \, {\rm and}\\ \\ 
    \mathcal{P}\cap \mathcal{E}_1=\{ (0,~0),~
    (0.47\ldots ,~0.22\ldots )\} ~.\end{array}$$
    The connectedness of the set obtained in case (ii) follows from its description. 
  \end{proof}

    \section{Proof of Theorem~\protect\ref{tm21}\protect\label{secprtm21}}

    Part (1).
  The last component of $\sigma _{\triangle}$
  is a~$+$. Suppose that there is a minus sign in $\sigma _{\triangle}$
  corresponding to $x^{2m}$, $1\leq m\leq [d/2]$. The polynomial $-x^{2m}+1$
  has exactly two real roots, namely $\pm 1$, and they are simple. For
  $\varepsilon >0$ small enough, the polynomial $P_0:=\varepsilon x^d-x^{2m}+1$
  has exactly three real roots two of which are close to $\pm 1$ and the third
  is $>1$. (One can notice that by Descartes' rule of signs it has not more
  than two positive and not more than one negative root.)

  Fix a degree $d$ polynomial $P_1$ with $\sigma (P_1)=\sigma _{\triangle}$.
  Then for $0<\eta\ll \varepsilon$, the
  polynomial $P_0+\eta P_1$ has signs of the coefficients as defined by
  $\sigma _{\triangle}$ and has exactly one negative and two positive simple
  roots and $(d-3)/2$ complex conjugate pairs counted with multiplicity.
  Thus $P_0+\eta P_1$ realizes the couple
  $(\sigma _{\triangle},(2,1))$.

  Suppose now that there are $(+)$-signs in $\sigma _{\triangle}$ corresponding
  to all monomials of even degrees. Then there is a monomial $x^{2m+1}$,
  $1\leq 2m+1<d$, whose sign is negative. The polynomial $P_2:=x^d-x^{2m+1}$
  has simple roots at $\pm 1$ and a $(2m+1)$-fold root at $0$. For
  $\varepsilon >0$ small enough, the polynomial $P_2+\varepsilon$ has exactly three
  real roots (two positive and one negative) all of which are simple. Then with $P_1$ and $\eta$ as above, the
  polynomial $P_2+\varepsilon +\eta P_1$ realizes the couple
  $(\sigma _{\triangle},(2,1))$. 
  \vspace{1mm}
  
Part (2). We construct a polynomial of the form $V:=x^d-Ax^{2m}-Bx^{2n-1}+C$,
  $A>0$, $B>0$, $C>0$, such that $V(1)=V'(1)=V(-1)=0$:

  \begin{equation}\label{eq3eq}\begin{array}{lll}
    1-A-B+C=0~,&-1-A+B+C=0~,&d-2mA-(2n-1)B=0\\ \\
    {\rm hence}&A=C=(d-2n+1)/2m~,&B=1~.\end{array}\end{equation}
  By Descartes' rule of signs, $V$ has no other real roots. After this one decreases $C$: $C\mapsto C-t$, $t\geq0$. For $t=0$,
  the root $-1$ moves
  with a finite speed to the right while the double root at $1$ splits into
  two real roots moving for $t=0$ with infinite speeds to the
  left and right respectively. Hence for $t>0$ close to $0$, one has
  $\alpha _1<\beta <\alpha _2$. The linear system (\ref{eq3eq}) with
  unknown variables $A$, $B$ and $C$ has non-zero
  determinant. Hence for $\varepsilon >0$ small enough, one can obtain
  polynomials $V$ satisfying the conditions

  $$V(1)=V'(1)=V(-1\pm \varepsilon )=0~~~\, \, {\rm (resp.}~~~\, \,
  V(1)=V'(1)\pm \varepsilon =V(-1)=0{\rm )}$$
  which after decreasing $C$ yield
  polynomials satisfying the inequalities $\beta <\alpha _1<\alpha _2$
  or $\alpha _1<\alpha _2<\beta$ (resp. the conditions
  $\beta =\alpha _1<\alpha _2$
  or $\alpha _1<\alpha _2=\beta$). It remains to construct the polynomial
  $V+\eta P_1$, where $0<\eta \ll \varepsilon$ and
  $\sigma (P_1)=\sigma _{\triangle}$. 
  \vspace{1mm}
  
  Part (3). There exists a monomial $x^{2m}$ with negative coefficient.
  Then for $\varepsilon >0$ small enough, the polynomial
  $W:=x^{2m-1}(x-1)(x-2)+\varepsilon$ has exactly one negative and two positive
  roots whose
  moduli satisfy the condition $\beta <\alpha _1<\alpha _2$. Its four non-zero
  coefficients have the signs as defined by $\sigma _{\triangle}$. After this one
  constructs the polynomial $W+\eta P_1$ with $\eta$ and $P_1$ as above.

  The inequality $\beta \geq \alpha _1$ is impossible. Indeed, represent a
  polynomial $W$ realizing the couple $\mathcal{C}$ in the form
  $W=W_o+W_e$, where $W_o$ is the odd and $W_e$ is the even part of $W$.
  Then for $x\in (-\beta ,0)$, one has $W_e(x)=W_e(-x)$ and $W_o(x)<W_o(-x)$.
  As $W(x)>0$ for $x\in (-\beta ,0)$, one cannot have $W(\alpha _1)=0$.
  This is a contradiction.
  \vspace{1mm}
  
  Part (4). Changing the polynomial $Y(x)$ with $\sigma (Y)=\sigma _{\triangle}$
  which realizes the couple
  $\mathcal{C}$
  to $Y_1:=x^dY(1/x)/Y(0)$ (we set $\sigma _{\triangle}^R:=\sigma (Y_1)$)
  one obtains a polynomial realizing the couple
  $(\sigma _{\triangle}^R,(2,1))$, where all odd monomials have positive signs,
  see Definition~\ref{defiZaction}. The roots of $Y_1$ are the reciprocals
  of the roots of $Y$, so one deduces
  part (4) from part~(3).

  \section{Proof of Theorem~\protect\ref{tmD}\protect\label{secprtmD}}


  The last sign of $\sigma _{\diamond}$ is a~$-$. Suppose that there are two
  monomials $x^{2m}$ and $x^{2p}$, $m>p>0$, whose signs defined by
  $\sigma _{\diamond}$ are $-$ and $+$ respectively. Consider the polynomial
  $P_3:=-x^{2m}+Ax^{2p}-B$, $A>0$, $B>0$. By Descartes' rule of signs it has at
  most two positive and at most two negative roots. We define $A$ and $B$
  such that $P_3$ has double roots at $1$
  and~$(-1)$:

  $$\begin{array}{lll}-1+A-B=0~,&-2m+2pA=0&{\rm hence}\\ \\
    A=m/p>0~,&B=(m-p)/p>0~.&\end{array}$$
  Then for $\varepsilon >0$ small enough, the polynomial $P_3+\varepsilon x^d$
  has exactly three real roots, all simple and positive. Suppose that $P_4$
  is a degree $d$
  polynomial such that $\sigma (P_4)=\sigma _{\diamond}$. Then for
  $0<\eta \ll \varepsilon$, the polynomial $P_3+\varepsilon x^d+\eta P_4$
  has sign pattern $\sigma _{\diamond}$ and has exactly three real roots, all
  simple and positive.

  Suppose that there are no monomials $x^{2m}$ and $x^{2p}$ as above.
  Then the signs of the first $a$ even monomials are positive and the ones of
  the last $(d+1-2a)/2$ of them are negative, $0\leq a\leq (d-1)/2$.
  Suppose that there are monomials $x^{2\nu}$, $x^{2\mu -1}$ and $x^{2\theta}$,
  $2\nu >2\mu -1>2\theta$, whose signs defined by $\sigma _{\diamond}$ are
  $-$, $+$ and $-$ respectively. By Descartes' rule of signs a polynomial
  of the form $P_5:=-x^{2\nu}+Cx^{2\mu -1}-Dx^{2\theta}$, $C>0$, $D>0$,
  has at most two
  positive roots and no negative roots; clearly it has a
  $(2\theta )$-fold root at $0$. One can choose $C$ and $D$ such that
  the positive roots are at $1$ and~$2$:

  $$\begin{array}{lll}
    -1+C-D=0~,&-2^{2\nu}+2^{2\mu -1}C-2^{2\theta}D=0&
  {\rm hence}\\ \\ D=(2^{2\nu}-2^{2\mu -1})/(2^{2\mu -1}-2^{2\theta})>0~,&
  C=D+1>0~.&\end{array}$$
  For $\varepsilon >0$ small enough, the polynomial $P_5+\varepsilon x^d$
  has three positive simple roots and no other real roots, and the polynomial
  $P_6:=P_5+\varepsilon x^d+\eta P_4$ with $\eta$ and $P_4$ as above has three
  positive simple roots, no other real roots and
  $\sigma (P_6)=\sigma _{\diamond}$.

  So now we suppose that there are no monomials $x^{2m}$ and $x^{2p}$, and no
  monomials  $x^{2\nu}$, $x^{2\mu -1}$ and $x^{2\theta}$ as above. Suppose that
  there are monomials $x^{2u-1}$ and $x^{2v-1}$, $d>2u-1>2v-1>0$, such that their
  signs are $-$ and $+$ respectively. One can construct a polynomial
  $P_7:=x^d-Ex^{2u-1}+Fx^{2v-1}$, $E>0$, $F>0$, having double roots at $\pm 1$, a
  $(2v-1)$-fold root at $0$ and no other real roots:

  $$\begin{array}{lll}
    1-E+F=0~,&d-(2u-1)E+(2v-1)F=0&{\rm hence}\\ \\ 
  F=(d-2u+1)/2(u-v)>0~,&E=F+1>0~.&\end{array}$$
  The absence of other real roots is guaranteed by Descartes' rule of signs.
  Hence for $0<\eta \ll \varepsilon \ll 1$, the polynomial
  $P_7-\varepsilon +\eta P_4$ has sign pattern $\sigma _{\diamond}$,
  three
  simple positive roots and no other real roots (recall that $P_4(0)<0$).

  Suppose that there are no couples or triples of monomials  $x^{2m}$, $x^{2p}$
  or $x^{2\nu}$, $x^{2\mu -1}$, $x^{2\theta}$ or $x^{2u-1}$, $x^{2v-1}$.
  Then the signs of the first $h_o\geq 1$ odd monomials (including $x^d$)
  are positive and the signs of the remaining $(d+1-2h_o)/2$
  odd monomials are negative. The signs of the first $h_e\geq 0$ even monomials
  are positive and the signs of the other $(d+1-2h_e)/2$ ones are negative.
  The absence of triples
  $x^{2\nu}$, $x^{2\mu -1}$, $x^{2\theta}$ implies $h_o\leq h_e+1$. The cases
  $h_o=h_e+1$ and $h_o=h_e$ are impossible, because there is only one sign
  change in the sign pattern. Therefore $1\leq h_o\leq h_e-1$.
  This means that the
  sign pattern is $D(a,b,c)$ with $a=h_o$, $b=h_e-h_o$ and $c=(d+1-2a-2b)/2$.

\section{Proof of Theorem~\protect\ref{tmDbis}\protect\label{secprtmDbis}}

Suppose that a polynomial $P:=\sum _{j=0}^da_jx^j$ realizes the couple
$(D(a,b,c),(3,0))$. 
  Denote by
  $$P_o:=\sum _{\nu =0}^{(d-1)/2}a_{2\nu +1}x^{2\nu +1}~~~{\rm and}~~~
  P_e:=\sum _{\nu =0}^{(d-1)/2}a_{2\nu}x^{2\nu}$$
  its odd and even parts respectively. In each of
  the sequences $\{ a_{2\nu +1}\} _{\nu =0}^{(d-1)/2}$ and
  $\{ a_{2\nu}\} _{\nu =0}^{(d-1)/2}$ there is exactly one sign change.
  Descartes' rule of signs implies that the polynomial $P_o$ has exactly three
  real roots, namely $-x_o$, $0$ and $x_o$, $x_o>0$, while
  the polynomial $P_e$ has exactly two real roots $\pm x_e$, $x_e>0$;
  all these five roots are simple. 

  \begin{rems}\label{rems:xoxe}
    {\rm (1) The polynomial $P_e$ is positive and increasing on
      $(x_e,\infty )$ and negative on $[0,x_e)$. The polynomial $P_o$ is
        positive and increasing on $(x_o,\infty )$
        and negative on $(0,x_o)$. 

  (2) One has $x_o\neq x_e$, otherwise $P(-x_o)=0$, i.e. $P$ has a negative
        root which is a contradiction.

        (3) One can assume that all positive roots of $P$ are distinct. Indeed,
        if this is not the case, then one can perturb $P$ to make all its
        positive roots
        distinct without changing the signs of its coefficients as follows.
        If $P$ has an $\ell$-fold root $\lambda >0$ ($\ell >1$), i.e.
        $P=(x-\lambda )^{\ell}P^0$, $P^0(\lambda )\neq 0$,
        then for $\varepsilon >0$ small enough, the
        polynomial $(x-\lambda )^{\ell -1}(x-\lambda -\varepsilon )P^0$ has
      the same sign pattern and its $\ell$-fold root has split into an
      $(\ell -1)$-fold and a simple real roots. It remains to iterate this
      construction sufficiently many times.}
\end{rems}

  \begin{nota}
    {\rm We denote by $0<\xi _1<\xi _2<\xi _3$ the 
      smallest three of the positive roots of $P$ and
      by $\zeta$ a positive number different from $x_o$ and~$x_e$.}
    \end{nota}

  It is clear that $P(\zeta )>0$ for $\zeta \in (\xi _1,\xi _2)$ 
and $P(\zeta )<0$ for $\zeta \in (\xi _2,\xi _3)$. 
For $\zeta \in (\xi _1,\xi _2)$, it is 
impossible to have $P_e(\zeta )\leq 0$ and $P_o(\zeta )\leq 0$
(with at most one equality, see part (2) of Remarks~\ref{rems:xoxe}).
It is also 
impossible to have $P_e(\zeta )\geq 0$ and $P_o(\zeta )\geq 0$. Indeed, 
this would imply that $x_e\leq \zeta <\xi _2$ and $x_o\leq \zeta <\xi _2$
which means that for $x\in (\xi _2,\xi _3)$, one has   
$P_e(x)\geq 0$ and $P_o(x)\geq 0$, i.e. $P(x)>0$. This is a contradiction.

Two possible situations are left:
\vspace{1mm}

{\rm a)} $P_e(\zeta )>0$, $P_o(\zeta )<0$; 
\vspace{1mm}

{\rm b)} $P_e(\zeta )<0$, $P_o(\zeta )>0$
\vspace{1mm}

\noindent   
(we skip the cases of equalities, because they were already taken into account). 

Situation a) cannot take place, because this would mean that  
$$P(-\zeta )=P_e(\zeta )-P_o(\zeta )>0~,$$
and since $P(0)<0$ and $P(x)\rightarrow -\infty$ for $x\rightarrow -\infty$,
in each of the intervals $(-\infty ,-\zeta )$ and $(-\zeta ,0)$ the 
polynomial $P$ would have at least one root -- a contradiction.

So suppose that we are in situation {\rm b)}, so  
$x_o<\zeta <x_e$. Without loss of generality one 
can assume that $\xi _1 =1$; this can be achieved by a rescaling  
$x\mapsto \xi _1 x$.  
Hence $P_o(1)=\beta >0$ and $P_e(1)=-\beta$. Considering the polynomial
$P/\beta$ instead of $P,$  
one can assume that $\beta =1$. One deduces from Lemma~\ref{lmlm} which
follows 
that there are no real roots of $P$ larger than $1$ (one can use the Taylor
series of $P$ at $1$); this 
contradiction completes the proof.

\begin{lm}\label{lmlm}
Under the above assumptions,   $P^{(m)}(1)>0$, for any $m=1, 2, \ldots, d$.
\end{lm}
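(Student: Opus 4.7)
The plan is to verify $P^{(m)}(1)>0$ for every $m=1,\ldots,d$ directly from the identity
\[
\frac{P^{(m)}(1)}{m!} \;=\; \sum_{j=m}^{d}\binom{j}{m}\, a_j,
\]
relying only on the strict monotonicity $\binom{j+1}{m}>\binom{j}{m}$ for $j\ge m\ge 1$ and on the equalities $P_o(1)=1$, $P_e(1)=-1$ already established. For $m\ge 2b+2c$ every index $j\ge m$ falls inside the leading block of $2a$ positive coefficients of the sign pattern $D(a,b,c)$, and the conclusion is immediate. The substance of the proof is therefore confined to $1\le m\le 2b+2c-1$.

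In that range I split the sum by parity of $j$. Write $A_o^+$ for the sum of the positive odd-indexed coefficients of $P$, sitting at $j=d-2a+2,\,d-2a+4,\,\ldots,\,d$, and $A_o^-$ for the sum of absolute values of the negative odd-indexed ones, sitting at $j=1,3,\ldots,d-2a$; then $P_o(1)=1$ reads $A_o^+=1+A_o^-$. Since every positive odd position strictly exceeds every negative odd position, the binomial monotonicity gives
\[
\sum_{j\ \mathrm{odd},\ j\ge m}\binom{j}{m}\, a_j \;\ge\; \binom{d-2a+2}{m}\, A_o^+ \;-\; \binom{d-2a}{m}\, A_o^-
\]
\[
=\; \binom{d-2a+2}{m} \;+\; \Bigl[\binom{d-2a+2}{m}-\binom{d-2a}{m}\Bigr] A_o^- \;>\; 0.
\]
On the even side define $A_e^+$ (positives at $j=2c,\,2c+2,\,\ldots,\,d-1$) and $A_e^-$ (negatives at $j=0,\,2,\,\ldots,\,2c-2$); the equality $P_e(1)=-1$ reads $A_e^-=1+A_e^+$. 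When $m\ge 2c-1$ no negative even coefficient survives the constraint $j\ge m$, so the even contribution is nonnegative and the odd bound above already suffices. For $m\le 2c-2$ the analogous lower bound for the even side is $[\binom{2c}{m}-\binom{2c-2}{m}]A_e^+-\binom{2c-2}{m}$, and adding the two parts gives
\[
\Bigl[\binom{d-2a+2}{m}-\binom{2c-2}{m}\Bigr] \;+\; \Bigl[\binom{d-2a+2}{m}-\binom{d-2a}{m}\Bigr]A_o^- \;+\; \Bigl[\binom{2c}{m}-\binom{2c-2}{m}\Bigr]A_e^+,
\]
each bracket being strictly positive: the first since $b\ge 1$ forces $d-2a+2=2b+2c+1>2c-2$, the remaining two by the binomial monotonicity valid at $m\ge 1$.

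The main obstacle is bookkeeping: in each regime of $m$ one must check that the extremal positions $d-2a+2,\ d-2a,\ 2c,\ 2c-2$ stand in the correct order with respect to $m$ so that the lower bounds for positive contributions and the upper bounds for negative contributions are simultaneously valid. This case analysis is exactly what makes the proof longer than its analogue Lemma~6 of \cite{KoSh}, whose sign pattern has no alternating middle block and therefore keeps the positive-odd and negative-odd ranges strictly separated without further subcases.
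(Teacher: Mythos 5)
Your argument is correct and is essentially the paper's proof in different clothing: your lower bound $\binom{d-2a+2}{m}A_o^+-\binom{d-2a}{m}A_o^-+\binom{2c}{m}A_e^+-\binom{2c-2}{m}A_e^-$ is exactly $\tfrac{1}{m!}(u_mE-v_mF+w_mG-t_mH)$ from the paper's extremal configuration $P_o=Ex^{2b+2c+1}-Fx^{2b+2c-1}$, $P_e=Gx^{2c}-Hx^{2c-2}$, and the final regrouping using $A_o^+=1+A_o^-$, $A_e^-=1+A_e^+$ is the same algebra as the paper's $(u_m-v_m)E+(w_m-t_m)G+(v_m-t_m)>0$. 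Your explicit splitting into the regimes $m\ge 2b+2c$ and $2c-1\le m\le 2b+2c-1$ is a welcome refinement, since the paper's uniform claim $u_m>v_m>w_m>t_m$ degenerates when some of these factors vanish for large $m$.
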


\begin{proof}[Proof of Lemma~\ref{lmlm}.]
  In the proof we allow zero values of the coefficients as well. This
  is because we need to deal with compact sets on which minimization
  arguments are to be applied.

  Suppose that the sum $\delta _1:=a_1+a_3+\cdots +a_{2b+2c-1}$ is fixed
  (recall that these are all the negative coefficients of $P_o$). Then for any
  $m=1$, $2$, $\ldots$, $d,$ it is true that
  $P_o^{(m)}(1)$ is minimal for
  $$a_{2b+2c-1}=\delta _1~,~~~\,  
  a_1=a_3=\cdots =a_{2b+2c-3}=0~.$$
  Indeed, when computing the values of the
  derivatives at $x=1$, monomials of larger degree in $x$ 
are multiplied by larger factors (equal to these degrees). We apply here 
$(d-3)/2$ times the fact that for $A+B$ fixed,
the inequalities $A\geq 0$, $B\geq 0$ and
$\lambda >\mu >0$ imply that the sum $\lambda A+\mu B$ is maximal when $B=0$.

Similarly, if the sum $\delta _2:=a_{2b+2c+1}+a_{2b+2c+3}+\cdots +a_d$
of all positive
coefficients of $P_o$ is fixed, then
$P_o^{(m)}(1)$ is minimal for $a_{2b+2c+1}=\delta _2$,
$a_{2b+2c+3}=\cdots =a_d=0$.

For the polynomial $P_e$ we obtain in the same way that if the sums
$$\delta _3:=a_0+a_2+\cdots +a_{2c-2}~~~\, {\rm and}~~~\,
\delta _4:=a_{2c}+\cdots +a_{d-1}$$
are fixed, then $P_e^{(m)}(1)$ is minimal for $a_{2c-2}=\delta _3$,
$a_0=a_2=\cdots =a_{2c-4}=0$, $a_{2c}=\delta _4$, $a_{2c+2}=\cdots =a_{d-1}=0$.
Thus the polynomials $P_o$ and $P_e$ are of the form

$$P_o=Ex^{2b+2c+1}-Fx^{2b+2c-1}~~~,~~~
P_e=Gx^{2c}-Hx^{2c-2}~,$$
with $E:=a_{2b+2c+1}\geq 0$, $-F:=a_{2b+2c-1}\leq 0$, $G:=a_{2c}\geq 0$ and
$-H:=a_{2c-2}\leq 0$. Recall that

$$P(1)=0~,~~~P_o(1)=1~~~{\rm and}~~~P_e(1)=-1~,~~~{\rm i.~e.}~~~
E-F=1~~~{\rm and}~~~G-H=-1~.$$
The values of the derivatives at $x=1$ are of the form

$$P^{(m)}(1)=u_mE-v_mF+w_mG-t_mH~,~~~\, \, u_m>v_m>w_m>t_m~,
$$
with $u_m,~v_m,w_m,~t_m\in \mathbb{N}$. Hence

$$\begin{array}{lll}P^{(m)}(1)&=&(u_m-v_m)E+v_m(E-F)+(w_m-t_m)G+t_m(G-H)\\ \\
  &=&(u_m-v_m)E+(w_m-t_m)G+(v_m-t_m)>0~.\end{array}$$ 
\end{proof}

\end{document}